\documentclass[12pt]{amsart}

\usepackage{amsmath}
\usepackage{epsfig, graphicx}
\usepackage{pinlabel}

\newtheorem{thm}{Theorem}

\newtheorem{lemma}{Lemma}

\newtheorem*{thma}{Theorem}

\theoremstyle{definition}

\newtheorem{remark}{Remark}

\newtheorem{defn}{Definition}

\newtheorem*{ack}{Acknowledgement}

\def\a{\alpha}
\def\b{\beta}
\def\e{\epsilon}
\def\bR{\mathbb R}


\def\g{\gamma}

\def\rk{\textup{rank}\, }

\input xy
\xyoption{all}

\input epsf
\epsfxsize=2in


\def\bx{{\bf x}}

\linespread{1.16}

\begin{document}

\title{On certain hyperplane arrangements and colored graphs}
\date{\today}
\author{Joungmin Song}
\address{Division of Liberal Arts and Sciences\\GIST\\ Gwangju, 61005, Korea}
\email{songj@gist.ac.kr}
\keywords{hyperplane arrangements, bipartite graphs, colored graphs}
\subjclass[2010]{32S22,05C30}

\begin{abstract}
We exhibit a one-to-one correspondence between $3$-colored graphs and subarrangements of certain hyperplane arrangements denoted $\mathcal J_n$, $n \in \mathbb N$. We define the notion of centrality of $3$-colored graphs, which corresponds to the centrality of hyperplane arrangements. Via the correspondence, the characteristic polynomial $\chi_{\mathcal J_n}$ of $\mathcal J_n$ can be expressed in terms of the number of central $3$-colored graphs, and we compute $\chi_{\mathcal J_n}$ for $n = 2, 3$.
\end{abstract}

\maketitle

\section{Introduction}
A hyperplane arrangement is a finite set of affine hyperplanes in a real affine space. In this article, we shall consider a hyperplane arrangement problem of a specific type: Given a positive integer $n$, let $[n]$ denote $\{1, 2, 3, \dots, n\}$. For each $1 \le \a < \b \le n$, we define
\[
H_{\a\b} := \{ \bx \in \bR^n \, | \, x_\a + x_\b = 1\} = H_{\b\a}
\]
which are said to be walls or hyperplanes of {\bf type I}. For each $i \in [n]$, define
\[
0_i := \{\bx \in \bR^n  \, | \, x_i = 0\}, \mbox{ and } 1_i:=\{\bx \in \bR^n  \, | \, x_i = 1\}
\]
which are said to be walls of {\bf type II}. Let $\mathcal J_n$ denote the hyperplane arrangement consisting of all hyperplanes of type I or type II. We are interested in the number of the {\it regions}, i.e., the connected components of $\mathbb R^n\setminus \bigcup_{H \in \mathcal J_n}H$.


Our particular hyperplane arrangement has its origin in algebraic geometry, namely the number of certain moduli spaces \cite[Problem~5.2]{Hassett}. But as we were researching for a solution, we discovered a neat relation with graph theory and soon this interplay has taken the center stage. The problem was then modified to better suit the graph theoretic approach. The arrangement $\mathcal J_n$ should remind experts of (deformations of) the braid arrangement, especially  the  well known {\it Shi arrangement} \cite{Shi} which consists of walls of the forms $x_i - x_j = 0$ and $x_i - x_j = 1$. We hope to explore the relation to Shi arrangements in the future.

We  refer to Lecture 1 of Stanley's  chapter \cite{Stanley} on hyperplane arrangements for many fundamental results, but recall a few key notions here. Let $\mathcal B = \{H_i \, | \, i\in I\}$ be a hyperplane arrangement where
\[
{ H_{i} = \left\{ \bx \in \bR^n \, \left| \, \sum_{j=1}^n a_{ij} x_j = b_{i}\right.\right\}}
\]
and $I$ is an index set.
The arrangement $\mathcal{B}$ is said to be {\it central} if the intersection of all hyperplanes in $\mathcal{B}$ is nonempty.
The {\it rank} of a hyperplane arrangement is the dimension of the space spanned by the normal vectors to the hyperplanes in the arrangement.

\begin{defn}\label{D:ass-matrix} Let $A$ and $b$ denote the matrices $(a_{ij})$ and $b = (b_{i}), \ i=1, \dots, |\mathcal B|$, respectively. The augmented matrix $B = [A | b]$ will be called the {\it matrix associated with the hyperplane arrangement $\mathcal B$} or simply the {\it associated matrix of $\mathcal B$}.
\end{defn}
Since the row vectors of $A$ are the normal vectors to the hyperplanes, rank of $\mathcal B$ equals $\rk(A) $. In central hyperplane arrangements, this also equals $\rk(B)$, since the associated matrix is consistent.  Note that $[A | b]$ completely determines $\mathcal B$, and giving a hyperplane arrangement is equivalent to giving its associated matrix.

  Let $\mathcal A$ be a hyperplane arrangement. Then the {\it characteristic polynomial} of $\mathcal A$ is defined
\[
\chi_{\mathcal A}(t) = \sum_{\mathcal B} (-1)^{\left|\mathcal B\right|} t^{n-\rk(\mathcal B)}
\]
where $\mathcal B$ runs through all central sub-arrangements of $\mathcal A$: In fact, the characteristic polynomial is defined using the M\"obius function \cite[Definition~1.3]{Stanley} and the equivalence is a theorem due to H. Whitney \cite[Lemma~2.3]{Orlik-Terao}, but this form suits our purpose just fine.  The following is perhaps the most fundamental theorem when it comes to counting the number of regions.

\begin{thma}\cite{Zaslavsky} Let $\mathcal A$ be a hyperplane arrangement in an $n$-dimensional real vector space. Let $r(\mathcal A)$ be the number of chambers and $b(\mathcal A)$  be the number of relatively bounded chambers. Then we have
\begin{enumerate}
\item $b(\mathcal A) = (-1)^{n}\chi(+1)$.
\item $r(\mathcal A)= (-1)^{n}\chi(-1)$.
\end{enumerate}
\end{thma}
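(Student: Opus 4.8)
The plan is to prove both identities simultaneously by induction on the number of hyperplanes $|\mathcal A|$, using deletion--restriction. Fix a hyperplane $H_0 \in \mathcal A$ and set $\mathcal A' = \mathcal A \setminus \{H_0\}$ (the \emph{deletion}, still an arrangement in $\bR^n$) and $\mathcal A'' = \{H \cap H_0 \mid H \in \mathcal A', H \cap H_0 \neq \emptyset\}$ (the \emph{restriction}, an arrangement inside the $(n-1)$-dimensional space $H_0$). The combinatorial input I would isolate first is the recurrence
\[
\chi_{\mathcal A}(t) = \chi_{\mathcal A'}(t) - \chi_{\mathcal A''}(t),
\]
which follows from the Whitney-type expansion recalled above by splitting the sum over central subarrangements $\mathcal B$ according to whether $H_0 \in \mathcal B$ (equivalently, from an elementary M\"obius-function computation on the intersection posets). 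Because $\mathcal A''$ lives in dimension $n-1$, every evaluation of $\chi_{\mathcal A''}$ carries an extra sign relative to $\chi_{\mathcal A}$, and this sign is exactly what converts the minus sign above into the plus signs of the geometric recurrences below.

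Next I would establish the two geometric recurrences. For the regions, $r(\mathcal A) = r(\mathcal A') + r(\mathcal A'')$: reinserting $H_0$ into $\mathcal A'$ leaves untouched every region it misses and cuts in two every region it crosses, and the regions it crosses are in bijection with the chambers into which the traces of the remaining hyperplanes divide $H_0$, i.e.\ with the regions of $\mathcal A''$. Granting this, I evaluate the $\chi$-recurrence at $t=-1$, multiply by $(-1)^n$, and use $(-1)^n\chi_{\mathcal A''}(-1) = -\,(-1)^{n-1}\chi_{\mathcal A''}(-1)$ together with the inductive hypothesis in dimensions $n$ and $n-1$ to obtain $r(\mathcal A) = r(\mathcal A') + r(\mathcal A'')$, closing the induction for (2).

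The analogous bounded-region recurrence $b(\mathcal A) = b(\mathcal A') + b(\mathcal A'')$ is where I expect the real work to lie. One must track how the bounded/unbounded status of chambers changes when $H_0$ is inserted: a bounded chamber of $\mathcal A'$ either survives or is split into two bounded chambers, while certain unbounded chambers get ``capped off'' by $H_0$ and become bounded, and the net gain must be shown to equal the number of bounded chambers of $\mathcal A''$. Making this bijection precise --- in particular ruling out spurious contributions from unbounded chambers of $\mathcal A''$ --- is the main obstacle; the cleanest route is probably to argue that a new chamber of $\mathcal A$ is relatively bounded if and only if the face it induces on $H_0$ is a relatively bounded chamber of $\mathcal A''$. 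Once the recurrence is in hand, evaluating the $\chi$-recurrence at $t = +1$ and repeating the sign bookkeeping yields (1).

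Finally I would settle the base of the induction and the degenerate cases. The empty arrangement in $\bR^n$ has $\chi(t) = t^n$ and a single (unbounded) region, and the formulas must be read with the convention that $b$ counts \emph{relatively} bounded chambers --- bounded modulo the lineality space --- so that the normalization $(-1)^n$ is correct; I would state this convention explicitly and verify the one-hyperplane case by hand. Keeping careful track of the rank and lineality data through the deletion--restriction step, so that $\mathcal A''$ is genuinely treated as an $(n-1)$-dimensional arrangement, is what makes the inductive signs line up, and is the only other point demanding care.
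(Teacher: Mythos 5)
The paper never proves this statement: it is Zaslavsky's theorem, quoted with a citation to \cite{Zaslavsky}, with \cite{Stanley} as the background reference. So there is no internal proof to compare against; your deletion--restriction induction is exactly the standard textbook proof (Lecture~2 of Stanley's notes), and the only question is whether your sketch of it is correct. For part (2) it is: the recurrence $\chi_{\mathcal A}(t)=\chi_{\mathcal A'}(t)-\chi_{\mathcal A''}(t)$ does follow from the Whitney-type sum by splitting on whether $H_0\in\mathcal B$ (the fiber of the restriction map over a central $\mathcal C\subseteq\mathcal A''$ contributes $-(-1)^{|\mathcal C|}t^{(n-1)-\rk(\mathcal C)}$, by a product of alternating sums over the hyperplanes with a common trace on $H_0$), the recurrence $r(\mathcal A)=r(\mathcal A')+r(\mathcal A'')$ holds unconditionally, and the induction closes in all cases.

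Part (1), however, contains a genuine gap. The statement you induct on, $b(\mathcal A)=(-1)^{n}\chi_{\mathcal A}(1)$, is false for non-essential arrangements (those of rank $<n$), and such arrangements unavoidably arise as deletions $\mathcal A'$ even when $\mathcal A$ is essential. Concretely, for the empty arrangement in $\bR^n$ one has $\chi(t)=t^n$, so $(-1)^n\chi(1)=(-1)^n$, while $b=0$ under the strictly-bounded convention and $b=1$ under the relatively-bounded one; for odd $n$ neither equals $-1$. So, contrary to your claim, the relatively-bounded convention does not make the normalization $(-1)^n$ correct; the correct general sign is $(-1)^{\rk(\mathcal A)}$. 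Correspondingly, the recurrence you plan to prove, $b(\mathcal A)=b(\mathcal A')+b(\mathcal A'')$, is false whenever deleting $H_0$ drops the rank: for $\mathcal A=\{H_0\}$ a single point in $\bR^1$, $b(\mathcal A)=0$ while $b(\mathcal A')+b(\mathcal A'')$ equals $2$ (relatively bounded) or $1$ (strictly bounded). The repair is classical: induct on the corrected statement $b(\mathcal A)=(-1)^{\rk(\mathcal A)}\chi_{\mathcal A}(1)$; prove your geometric recurrence only under the hypothesis $\rk(\mathcal A)=\rk(\mathcal A')$ (there your proposed bijection via traces on $H_0$ is the right argument); and in the case $\rk(\mathcal A)=\rk(\mathcal A')+1$ show directly that $b(\mathcal A)=0$ (every chamber contains a ray in a direction lying in the lineality space of $\mathcal A'$ but not of $\mathcal A$) and that $\chi_{\mathcal A}(1)=0$ (here $X\mapsto X\cap H_0$ is an isomorphism of intersection posets $L(\mathcal A')\to L(\mathcal A'')$ shifting dimension by one, so $\chi_{\mathcal A'}(1)=\chi_{\mathcal A''}(1)$). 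The paper's formula (1) with $(-1)^n$ is then recovered for essential arrangements --- in particular for $\mathcal J_n$, which contains $0_i$ for every $i$ and hence has full rank, the only case actually used in the paper.
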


In our case, we have $\binom n2$ walls of type I and $2n$ walls of type II.  All together there are $N = \binom n2 + 2n$ walls and a simple case by case analysis would require centrality examination and rank computation of $2^N$ subarrangements.
The main theme of this paper is that, by a systematic use of symmetry and geometry, we can reduce that number significantly. Enumeration is further enhanced by using the notion of {\it associated graphs} (Definition~\ref{D:ass-graph}) and associated matrices.

The graph theory approach is for the specific hyperplane arrangement problem studied in this paper. We associate a graph to each hyperplane subarrangement of $\mathcal J_n$, and we translate the centrality of  hyperplane arrangements in terms of graph properties (Definition~\ref{D:central} and Theorem~\ref{T:central}). This makes enumeration of central subarrangements much more systematic and efficient: We work out the basic examples of two and three dimensional cases in Section~\ref{S:lowdim} but without borrowing any significant results from graph theory. We hope to employ more substantial  graph theory results to attack the higher dimensional cases in the future. Also, we believe that the method can be generalized to treat other hyperplane arrangements by considering the {\it signed graphs} \cite{Zas-Signed}, and this will be taken up in a forthcoming paper. We thank the anonymous referee for pointing this out.

\begin{ack} The anonymous referee reviewed the article in great detail and made numerous corrections and suggestions. They enormously improved this article and the author would like to thank him/her deeply. The author was supported by GIST Research Fund. The author also would like to thank D. Hyeon for suggesting this problem.
\end{ack}

\section{Associated colored graphs}
Let $(V, E)$ be a graph with vertices $V = [n]$ and the set of edges $E$. Let $v$ and $v'$ be vertices (which may be equal).    A path is said to be {\it even} (resp., odd) if its length is even (resp., odd).

For the purpose of this paper, we shall consider $\{0, 1, *\}$-colored graphs on $V = [n]$ ($*$ indicating no numeric value assigned). We shall let $\gamma : V \to \{0, 1,*\}$ denote the color function. A vertex $v$ with $\gamma(v) = \ast$ will be called {\it not colored}. By a $3$-colored graph, we shall always mean a $\{0, 1, *\}$-colored graph.

\begin{defn} \label{D:central} A $3$-colored graph $([n], E)$ is said to be ${\it central}$ if
\begin{enumerate}
\item if $v$ is colored, then it is not on a  closed walk of odd length, and
\item $\gamma(v) = \gamma(v')$ (resp., $\gamma(v) \ne \gamma(v')$) for any pair of colored vertices $v, v'$ such that there is a $v-v'$ path of even (resp., odd) length.
 \end{enumerate}
\end{defn}

\begin{remark}\label{R:central}
\begin{enumerate}
\item Note that the centrality condition determines the parity of the paths between any two given colored vertices: In a central graph, if a $v-v'$ path is even (resp. odd), so are all other $v-v'$ paths.

\item Condition (1) of Definition~\ref{D:central} is equivalent to that every component $C$ with a colored vertex is bipartite. Suppose $C$ has a closed walk $\gamma$. Since $C$ is connected, there exists a path $\tau$ from a colored vertex $v$ to $\gamma$. Then traversing $\tau$, $\gamma$, and $\tau^{-1}$ back to $v$ is an odd cycle which contains $v$, violating Condition (1). Converse is obvious.
\end{enumerate}
\end{remark}

\




\begin{defn} For a subarrangement $\mathcal A \subseteq \mathcal J_n$, $I(\mathcal A)$ denotes the set of indices $\tau \in [n]$ that appear in $\mathcal A$. That is,
\[
\displaystyle{
I(\mathcal A) = \left(\bigcup_{H_{\a\b} \in \mathcal A} \{\a, \b\}\right) \cup \left(\bigcup_{0_\a\in \mathcal A} \{\a\}\right) \cup \left( \bigcup_{1_\b \in \mathcal A} \{\b\}\right).}
\]
\end{defn}

\begin{defn}\label{D:ass-graph} Let $\mathcal A$ be a subarrangement of $\mathcal J_n$ which does not contain both $0_i$ and $1_i$, $\forall i$. The {\it associated graph $\Gamma_{\mathcal A}$} of $\mathcal A$  is a $3$-colored graph with the vertex set $V(\Gamma_\mathcal{A})= I(\mathcal{A})$, and edge set $E(\Gamma_\mathcal{A})=\{\{\alpha,\beta\}: H_{\alpha\beta} \in \mathcal{A}\}$
where the vertices are assigned {\bf exactly} one of $\{0, 1, *\}$ in the obvious fashion: given $i\in I(\mathcal A)$, we assign $0$ (resp., $1$) to $i$ if $0_i \in \mathcal A$ (resp., $1_i \in \mathcal A$). If neither $0_i$ nor $1_i$ is in $\mathcal A$, 
we assign $\ast$ to $i$, i.e.,  the vertex $i$ is not colored.
\end{defn}

Let $\mathcal S_n$ be the set of all subarrangements of the hyperplane arrangements $\mathcal J_n$ satisfying the conditions of the Definition~\ref{D:ass-graph}, in particular,  those arrangements which do not contain both $0_i$ and $1_i$, $\forall i$.  Then we have the following lemma:
\begin{lemma}
There is a one-to-one correspondence between $\mathcal S_n$ and the set of all $3$-colored graphs on $[n]$.
\end{lemma}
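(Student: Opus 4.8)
The plan is to exhibit explicit maps in both directions and check that they are mutually inverse. The map from $\mathcal{S}_n$ to colored graphs is already in hand: it is $\mathcal{A} \mapsto \Gamma_{\mathcal{A}}$ of Definition~\ref{D:ass-graph}, with the understanding that every index of $[n]$ not lying in $I(\mathcal{A})$ is adjoined to $\Gamma_{\mathcal{A}}$ as an isolated, uncolored vertex, so that the result is genuinely a graph on the full vertex set $[n]$. In the other direction I would define $\Psi$ sending a $3$-colored graph $G = ([n], E, \gamma)$ to the subarrangement
\[
\Psi(G) = \{ H_{\a\b} : \{\a,\b\} \in E\} \cup \{0_i : \gamma(i) = 0\} \cup \{1_i : \gamma(i) = 1\}.
\]

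First I would check that both maps are well defined. For $\Psi(G)$, since $\gamma$ is a function each index $i$ receives at most one of the colors $0,1$, so $\Psi(G)$ never contains both $0_i$ and $1_i$; hence $\Psi(G) \in \mathcal{S}_n$. Conversely, the reason $\Gamma_{\mathcal{A}}$ is a legitimate $3$-colored graph---i.e. that the prescription assigning a color to each $i \in I(\mathcal{A})$ returns \textbf{exactly one} value in $\{0,1,*\}$---is precisely the defining condition of $\mathcal{S}_n$: because $\mathcal{A}$ does not contain both $0_i$ and $1_i$, the cases ``$0_i \in \mathcal{A}$'' and ``$1_i \in \mathcal{A}$'' are mutually exclusive, and the remaining case gives the color $*$. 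This is the one place where membership in $\mathcal{S}_n$ is used, and it is exactly what matches the single-valuedness of a color function.

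Then I would verify $\Psi \circ \Gamma_{(-)} = \mathrm{id}$ and $\Gamma_{\Psi(-)} = \mathrm{id}$ by tracking edges and colors separately. On edges this is a tautology: $\{\a,\b\}$ is an edge of $\Gamma_{\mathcal{A}}$ iff $H_{\a\b} \in \mathcal{A}$, and $H_{\a\b} \in \Psi(G)$ iff $\{\a,\b\} \in E(G)$, so the edge sets are preserved in both composites. On colors, $\gamma(i) = 0$ (resp.\ $1$) in $\Gamma_{\mathcal{A}}$ iff $0_i \in \mathcal{A}$ (resp.\ $1_i \in \mathcal{A}$), which is exactly when $\Psi$ reinserts that type II wall; and an index is uncolored iff neither wall is present. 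Matching the three color classes to the three possibilities for the set $\{0_i, 1_i\} \cap \mathcal{A}$ closes the loop.

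The main thing to handle with care is the discrepancy between the vertex set $I(\mathcal{A})$ of $\Gamma_{\mathcal{A}}$ and the full vertex set $[n]$ demanded by the target. The point to argue is that an index $i$ is absent from $I(\mathcal{A})$ precisely when it carries no type I wall and neither type II wall---equivalently, when the corresponding vertex is isolated and uncolored. Thus the isolated uncolored vertices of a graph on $[n]$ are in canonical correspondence with $[n] \setminus I(\mathcal{A})$, so extending $\Gamma_{\mathcal{A}}$ by these vertices (and, conversely, having $\Psi$ ignore them, since they contribute no walls) is forced and loses no information. Once this bookkeeping is fixed, the two composites are the identity and the asserted bijection follows.
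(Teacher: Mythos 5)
Your proposal is correct and takes essentially the same route as the paper: the paper's proof likewise just exhibits the inverse map $\Gamma \mapsto \mathcal{A}_{\Gamma}$ (the walls $x_i = \gamma(i)$ for colored vertices together with $H_{ij}$ for edges) and asserts it is clearly inverse to $\mathcal{A} \mapsto \Gamma_{\mathcal{A}}$. Your extra care with the vertex-set discrepancy---adjoining the indices in $[n] \setminus I(\mathcal{A})$ to $\Gamma_{\mathcal{A}}$ as isolated uncolored vertices so that it is genuinely a graph on $[n]$---is a bookkeeping point the paper leaves implicit, and your treatment of it is the right one.
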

\begin{proof} Given any $3$-colored graph $\Gamma = ([n], E)$ with the color function $\g : [n] \to \{0,1,*\}$, we associate the hyperplane arrangement
\[
\mathcal A_{\Gamma} :=\left\{ \{x_i = \g(i)\}_{\g(i)\ne *} \right\}\cup \{ H_{ij} \}_{\{i,j\} \in E}.
\]
This clearly is inverse to the association $\mathcal A \mapsto \Gamma_{\mathcal A}$ defined
in Definition~\ref{D:ass-graph}.
\end{proof}

\noindent Note that for the graph $\Gamma$ associated with a central arrangement $\mathcal{A}$, there are no isolated non-colored vertices and two adjacent colored vertices are of different colors.

\begin{thm}\label{T:central} Let $\mathcal A  \in \mathcal S_n.$  Then $\mathcal A$ is central if and only if $\Gamma_{\mathcal A}$ is central.
\end{thm}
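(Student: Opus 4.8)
The plan is to work directly with the associated matrix $B = [A \mid b]$ of $\mathcal{A}$ and to recast centrality as the consistency of the corresponding linear system. Recall that $\mathcal{A}$ is central precisely when $\bigcap_{H \in \mathcal{A}} H \ne \emptyset$, i.e., when the system of linear equations defining the hyperplanes in $\mathcal{A}$ has a solution. A type I wall $H_{\alpha\beta}$ contributes the equation $x_\alpha + x_\beta = 1$, while a colored vertex $i$ with $\gamma(i) \in \{0,1\}$ contributes $x_i = \gamma(i)$. So the first step is to translate: $\mathcal{A}$ is central if and only if there exists $\mathbf{x} \in \mathbb{R}^n$ simultaneously satisfying $x_\alpha + x_\beta = 1$ for every edge $\{\alpha, \beta\} \in E(\Gamma_{\mathcal{A}})$ and $x_i = \gamma(i)$ for every colored vertex $i$.

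**The core argument.** I would analyze this system component by component of the graph $\Gamma_{\mathcal{A}}$, since the equations for distinct components involve disjoint variables and can be solved independently. Within a single connected component, fix a vertex $v_0$ and set $x_{v_0} = s$; then for any other vertex $w$ in the component, following a path from $v_0$ to $w$ and repeatedly using $x_\alpha + x_\beta = 1$ forces $x_w = s$ if the path has even length and $x_w = 1 - s$ if the path has odd length. This is where Condition (1) of Definition~\ref{D:central} enters: a well-defined assignment requires that the parity of $w$ relative to $v_0$ be independent of the chosen path, which is exactly the bipartiteness of the component. By Remark~\ref{R:central}(2), Condition (1) is equivalent to every component containing a colored vertex being bipartite; and on a component with no odd closed walk, the two-valued assignment $x_w \in \{s, 1-s\}$ is consistent for the type I equations regardless of $s$. (A component with an odd cycle but no color still admits a solution at the fixed point $s = 1 - s = \tfrac{1}{2}$, which is why Condition (1) only restricts \emph{colored} components.) I would then show how the color constraints pin down $s$ and how consistency among them is governed precisely by Condition (2).

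**Reconciling the two centrality conditions.** Suppose a component $C$ contains a colored vertex $v$ with $\gamma(v) = c$. Setting $x_v = c$ forces $s$ (hence every $x_w$) to a definite value. For a second colored vertex $v'$ in $C$, the induced value of $x_{v'}$ is $c$ if the $v$–$v'$ path is even and $1-c$ if it is odd; consistency with the imposed constraint $x_{v'} = \gamma(v')$ demands exactly that $\gamma(v') = \gamma(v)$ for even paths and $\gamma(v') \ne \gamma(v)$ (i.e., $\gamma(v') = 1 - \gamma(v)$, using that colors are $0/1$) for odd paths — which is Condition (2). Thus within each colored component, solvability of the system is equivalent to Conditions (1) and (2) holding; for uncolored components the system is always solvable. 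Assembling the components, the full system is consistent if and only if $\Gamma_{\mathcal{A}}$ is central, completing both directions simultaneously.

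**Anticipated obstacle.** The main subtlety is the careful handling of closed walks versus simple cycles and the interaction between the two centrality conditions, particularly verifying that Condition (1) is precisely what makes the path-induced values well-defined and that no additional hidden constraint arises from walks that revisit vertices. I would lean on Remark~\ref{R:central} to package the bipartiteness equivalence cleanly, and I must be attentive to the degenerate cases (isolated colored vertices, components with no colored vertex, and the role of type II walls $0_i, 1_i$ that cannot both appear by the hypothesis $\mathcal{A} \in \mathcal{S}_n$). The bookkeeping — rather than any deep idea — is the real work; the geometric content is simply that the type I relation $x_\alpha + x_\beta = 1$ propagates a single degree of freedom $s$ across a component with a parity flip along each edge.
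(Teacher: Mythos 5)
Your proposal is correct and follows essentially the same route as the paper's proof: both reduce centrality to solvability of the defining linear system, split the graph into connected components (colorless ones handled by setting all coordinates to $1/2$), and propagate the relation $x_\alpha + x_\beta = 1$ along paths with a parity flip to show that solvability on a component containing colored vertices is exactly Conditions (1) and (2) of Definition~\ref{D:central}. The only difference is organizational: the paper reduces to a single connected colored component with a distinguished vertex $i_0$ and proves the two implications separately, while you carry a free parameter $s$ on each component and characterize solvability in both directions at once.
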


\begin{proof}
The corresponding colored graph  $\Gamma_{\mathcal A}$ can be decomposed into  three subgraphs $\Gamma',\ \Gamma'',\ \Gamma'''$
\begin{enumerate}
\item (graph of the first kind)   $\Gamma'$ is the union of colorless connected components; 
\item (graph of the second kind) $\Gamma''$ is the  union of isolated colored vertices;
\item (graph of the third kind) $\Gamma''' = \Gamma\setminus(\Gamma'\cup \Gamma'')$ is the union of the connected components with at least one colored vertex and at least one edge. 
\end{enumerate}

\def\cA{\mathcal A}

Accordingly, we can decompose $\mathcal A$ into
$ \mathcal A' \sqcup \mathcal A'' \sqcup \mathcal A'''$
such that the subarrangements $\mathcal A', \cA'', \cA'''$ correspond to $\Gamma',\ \Gamma'',\ \Gamma'''$, respectively.
For example, the hyperplane arrangement (Figure 1)
\[
\mathcal A = \{ H_{12}, H_{23}, 1_3, 0_5, H_{46}, 1_7, H_{8,9}\}
\]
decomposes into
\[
 \{  H_{46}, H_{89}\} \sqcup \{  0_5,  1_7\} \sqcup  \{H_{12},  H_{23}, 1_3\}
 \]
whose associated colored graph decomposition is
\begin{enumerate}
\item $\Gamma'= ( \{ 4, 6, 8, 9\}, \{ \{4,6\},\{8,9\}\})$;
\item $\Gamma''= (\{5, 7\}, \emptyset),\ \g(5) = 0,\ \g(7) = +1$;
\item $\Gamma''' = (\{1, 2,3\},  \{ \{1,2\},\{2,3\} \}),\ \g(3) = +1$.

\end{enumerate}



\begin{figure}[htbp!]
\labellist \small\hair 2pt
\pinlabel $1$ at 90 478
\pinlabel $1$ at 90 478
\pinlabel $2$ at 150 478
\pinlabel $3$ at 212 478
\pinlabel $8$ at 90 432
\pinlabel $9$ at 150 432
\pinlabel $4$ at 212 432
\pinlabel $7$ at 90 388
\pinlabel $6$ at 150 388
\pinlabel $5$ at 212 388

\endlabellist
\begin{center}
\includegraphics[scale=0.8]{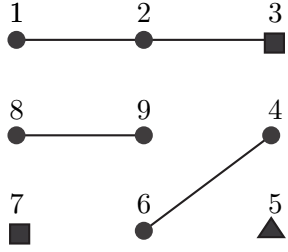}
\caption{Disks denote $*$-colored vertices; squares, 1-colored; and triangles, 0-colored}
\label{default}
\end{center}
\end{figure}

\noindent   Since the index sets $I(\mathcal A')$, $I(\mathcal A'')$ and $I(\mathcal A''')$
 are all disjoint from each other, $\mathcal A$ is central if and only if each of them is separately central.
It is also easy to see that $\mathcal A'$ and its associated graph $\Gamma_{\mathcal A'}$ are central: $\Gamma_{\mathcal A'}$ is trivially central since the centrality condition is vacuous for a non-colored graph. As for the hyperplane arrangement $\mathcal A'$, setting all variables $x_i$, $i \in I(\mathcal A')$, equal to $1/2$ satisfies all hyperplane equations in $\mathcal A'$.  Note that $\mathcal A''$ is also always central except the obvious non-central ones containing both $0_i$ and $1_i$ for some $i$, which we have discarded in the beginning (definition of $\mathcal S_n$).
Hence, without loss of generality, we may assume that $\mathcal A = \mathcal A'''$ so that its associated graph is connected and each vertex is connected to a distinguished colored vertex $i_0$ by a path, and that $\g(i_0) = 1$.

Now, we shall prove the assertion of the theorem for any such hyperplane arrangement. Suppose that the associated graph is central. We define a new coloring $\g^\star$ as follows. If $j$ is colored, set $\g^\star(j) = \g(j)$.
For a non-colored vertex $j$, choose a path $\sigma$ from $j$ to $i_0$ and assign $\g^\star(j) = \g(i_0) = 1$ if the path is even and $\g^\star(j) = 0$ otherwise. Note that all paths from $j$ to $i_0$ have the same parity since two paths of different parity would give a closed walk of odd length from $i_0$ to $i_0$, and this violates the centrality of $\Gamma_{\mathcal A}$. Let $x^\star$ be a point whose coordinates satisfy $x^\star_j = \g^\star(j)$ for any $j \in I(\mathcal A)$. We claim that $x^\star$ satisfies all hyperplane equations. Let $H_{\a\b} \in \mathcal A$. Suppose there is an even path $\sigma$ from $\a$ to $i_0$. By the defining property, $x^\star_\a = 1$. Then joining the edge $\{\b, \a\}$ to $\sigma$ creates an odd path from $\beta$ to $i_0$ which implies that $x^\star_\beta = 0$. Thus $x^\star_\a + x^\star_\b = 1$. The other case where $\g(\a) = 0$ is proved similarly.

Conversely, suppose that $\mathcal A$ is central.
If there is an even path $\{v_0, v_1, \dots, v_{2k}\}$ and $\gamma(v_0) = 0$, then the hyperplanes $x_{v_k} + x_{v_{k+1}} = 1$ corresponding to edges successively determine $x_{v_1} = 1$, $x_{v_2} = 0$, etc., so that $x_{v_{2k}} = 0 = \gamma(v_0)$. Likewise, if there is an odd path  $\{v_0, v_1, \dots, v_{2k+1}\}$, we have $\gamma(v_0) \ne \gamma(v_{2k+1})$. It follows that any two paths between two colored vertices have the same parity.

\end{proof}

\section{Arrangements in the plane and space}\label{S:lowdim}
In this section, we use our main theorem and  analyze the dimension $2$ and $3$ cases. We present them here while promising deeper results in a forthcoming paper where we use graph theoretic approach to give a formula for computing the characteristic polynomial in terms of the number of bipartite graphs of given rank and size.

\begin{defn} \begin{enumerate}
\item We let $r_{\e,\nu}^n$ denote the number of central graphs on $[n]$ with precisely $\e$ edges and $\nu$ colored vertices. When there is no danger of confusion, we drop the superscript $n$.
\item A graph with $\e$ edges and $\nu$ colored vertices will be called an $(\e,\nu)$ graph.
\end{enumerate}
\end{defn}
Note that $\e, \nu$ are the numbers of type I and of type II hyperplanes respectively, hence the order $(\e, \nu)$.

\subsection{ $n=2$}

$\mathcal J_2 = \{H_{12}, 0_1, 1_1, 0_2, 1_2\}$. The central subarrangements $\mathcal B$ are enumerated as follows, according to the cardinality.

\begin{enumerate}
\item $|\mathcal B|=1$:  $\mathcal B$ contains either a single colored vertex, or an edge.  There are five such cases.
\item $|\mathcal B| = 2$: $\{H_{12}, 1_1\}$, $\{H_{12}, 0_1\}$, $\{H_{12}, 1_2\}$, $\{H_{12}, 0_2\}$, $\{0_1, 0_2\}$, $\{1_1, 0_2\}$, $\{0_1,1_2\}$, $\{1_1,1_2\}$. These are of rank $2$.
\item $|\mathcal B|=3$: $\{H_{12}, 1_1, 0_2\}$, $\{H_{12}, 1_2, 0_1\}$. These are of rank $2$.
\item $|\mathcal B| \ge 4$: There are no central subarrangements with $\ge 4$ hyperplanes.
\end{enumerate}

With the trivial central arrangement with  $|\mathcal B| = 0$, the characteristic polynomial is
\[
\chi_{\mathcal J_2}(t) =  ((-1)^2 8 + (-1)^3 2) \cdot t^{2-2} + (-1)\cdot 5 t^{2-1} + t^{2-0} = 6 - 5 t + t^2.
\]
The number of chambers is $\chi(-1) = 12$ and the number of bounded chambers is $\chi(1) = 2$.

\begin{figure}[htbp]
\labellist \small\hair 2pt
\pinlabel $0$ at 155 560
\pinlabel $1$ at 248 560
\pinlabel $1$ at 155 650
\endlabellist
  \begin{center}
    \includegraphics[scale=0.5]{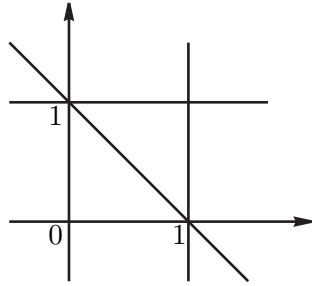}
  \end{center}
   \caption{$\mathcal J_2$ has $12$ regions and $2$ bounded regions.} \label{plane}
\end{figure}

\subsection{$n=3$}

In this case, $\mathcal J_3$ has three hyperplanes of type I and six hyperplanes of type II. Obviously, having both $0_i$ and $1_i$ for a given $i$ makes the arrangement not central. Hence any central subarrangement has no more than three hyperplanes of type II, so a central subarrangement $\mathcal B$ satisfies $|\mathcal B| \le 6$. We enumerate all central graphs, besides the trivial one with $|\mathcal B| = 0$.
\begin{enumerate}
\item $|\mathcal B| = 1 $:  $\mathcal B$ either contains a single colored vertex, or an edge.  There are nine such cases.
\item $|\mathcal B| = 2$: $\mathcal B$ may have { two edges and no colored vertices}, or an edge and a colored vertex, or two colored vertices.
These are all rank 2 arrangements.

\begin{enumerate}
\item ($\e = 0$): $\binom 32 \cdot 2 \cdot 2 = 12$ subarrangements;
\item ($\e = 1$): There are $\binom 31 \cdot (2+2) = 12$ central $(1,1)$ graphs with the colored vertex incident on the edge. There are six $(1,1)$ graphs with the colored vertex not incident on the edge.

\item($\e =2$): These are three labeled trees on $[3]$.

\end{enumerate}

\item $|\mathcal B| = 3$: These are all of rank $3$ except the six $(1,2)$ graphs with an edge with both vertices colored (with opposing colors) which are of rank 2.
\begin{enumerate}
\item ($\e = 0$): $2^3$ cases, obviously;
\item ($\e=1$): Choose an edge and either color the two vertices incident upon the edge ($\binom 32 \cdot 2 = 6$ cases, this is essentially the case of two vertices) or color one vertex incident on the edge and the other vertex not on the edge ($\binom 32\cdot 2^3 = 24$ cases);

\item ($\e=2$): Two edges and one colored vertex (of either color). There are $\binom 32 \cdot 3 \cdot 2 = 18$ cases;

\item ($\e=3$): There is only $1$ case.

\end{enumerate}

\item $|\mathcal B| = 4$: $\e \ge 1$ since otherwise a vertex must have two colors. These are all of rank $3,$ and there are 30 central graphs corresponding to $\mathcal B$ with $|\mathcal B|=4$.
\begin{enumerate}
\item $(\e = 1)$: There are two ways to color the two vertices incident on the one existing edge, and two ways to color the vertex not incident on the edge. Hence $\binom 31 \cdot 2 \cdot 2 = 12$ central $(1,3)$ graphs.

\item $(\e = 2)$: There are two edges incident on two colored vertices and a non-colored vertex.  The one non-colored vertex can be at an endpoint of the path or not.  There are $\binom 32  \cdot 2 \cdot 2 = 12$ and $\binom 32\cdot 2 = 6$ central $(2,2)$ graphs of each kind.

\item $(\e = 3)$: There are no central graphs of this type.  A closed path from the colored vertex to itself is a cycle of length 3.

\end{enumerate}

\item $|\mathcal B| = 5$:  There are no (3,2) central graphs.  There are six central $(2,3)$ graphs.  



\item There are no central subarrangements with $\ge 6$ hyperplanes since it would have to have either a cycle of length three with colored vertices or a vertex with two colors.
\end{enumerate}

\vspace{.5cm}

We summarize our findings in the table below.  \vspace{.3cm}

\begin{center}\begin{tabular}{|c||ccccc|}
\hline
rank \textbackslash $|\mathcal B| $ &\  \ $1$ & 2 & 3 & 4 & 5  \\  \hline
1 &\  \ 9 & 0&0 &0 &0   \\
2 & \ \ 0& {33} & 6 &0 &0  \\
3 & \  \ 0&0 & 51 & 30 & {6}  \\
\hline
\end{tabular}
\end{center}
\vspace{.2cm}

The characteristic polynomial is
\[\begin{aligned}
\chi_{\tiny{\mathcal J_3}}(t)  & = t^3 - 9t^{3-1} + \left(33 -6\right) t^{3-2} +(-51+30-6)t^{3-3} \\
& = t^3-9t^2+{27}t-{ 27}. \end{aligned}
\]
Interestingly, the polynomials for $n=2,3$ factor into linear forms.  In general, this turns out not to be the case $n\ge 4$ which will be illustrated in our forthcoming manuscript. It would be interesting to understand when the characteristic polynomial factors.


\begin{thebibliography}{Has03}

\bibitem[Has03]{Hassett}
Brendan Hassett.
\newblock {\em Moduli spaces of weighted pointed stable curves.}
\newblock {\em Adv. Math.}, 173(2):316--352, 2003.

\bibitem[OT92]{Orlik-Terao}
Peter Orlik and Hiroaki Terao.
\newblock {\em Arrangements of hyperplanes}, volume 300 of { Grundlehren der
  Mathematischen Wissenschaften [Fundamental Principles of Mathematical
  Sciences]}.
\newblock Springer-Verlag, Berlin, 1992.

\bibitem[Shi86]{Shi}
Jian~Yi Shi.
\newblock {\em The {K}azhdan-{L}usztig cells in certain affine {W}eyl groups},
  volume 1179 of { Lecture Notes in Mathematics}.
\newblock Springer-Verlag, Berlin, 1986.

\bibitem[Sta07]{Stanley}
Richard~P. Stanley.
\newblock {\em An introduction to hyperplane arrangements.}
\newblock In {\em Geometric combinatorics}, volume~13 of {IAS/Park City
  Math. Ser.}, pages 389--496. Amer. Math. Soc., Providence, RI, 2007.

\bibitem[Zas75]{Zaslavsky}
Thomas Zaslavsky.
\newblock {\em Counting the faces of cut-up spaces.}
\newblock {\em Bull. Amer. Math. Soc.}, 81(5):916--918, 1975.

\bibitem[Zas12]{Zas-Signed}
Thomas Zaslavsky.
\newblock {\em Signed graphs and geometry.}
\newblock {\em Journal of Combinatorics, Information and System Sciences},
  37(2-4):95--143, 2012.

\end{thebibliography}


\end{document}